\newtheorem{theorem}{Theorem}[section]
\newtheorem{lemma}[theorem]{Lemma}
\theoremstyle{definition}
\theoremstyle{remark}
\newtheorem{remark}[theorem]{Remark}
\numberwithin{equation}{section}
\newcommand{\hidden}[1]{\footnote{Hidden:  #1}}
\renewcommand{\hidden}[1]{}
\newcommand{\Z}{\mathbb{Z}}
\newcommand{\N}{\mathbb{N}}
\newcommand{\W}{\mathbb{W}}
\newcommand{\Ahat}{\hat{\mathbb{A}}}
\newcommand{\Hom}{\operatorname{Hom}}
\newcommand{\Gr}{\operatorname{Gr}}
\newcommand{\fs}{\text{fs}}
\newcommand{\fg}{\text{fg}}
\newcommand{\topHopf}{\text{top Hopf alg}}
\newcommand{\Hopf}{\text{Hopf alg}}
\newcommand{\topalg}{\text{top alg}}
\newcommand{\alg}{\text{alg}}
\newcommand{\coalg}{\text{coalg}}
\newcommand{\Spf}{\operatorname{Spf}}
\newcommand{\Mor}{{\text{Mor}}}
\newcommand{\rK}{{\rm K}}
\begin{document}

% \title[short text for running head]{full title}
\title{Cartier's first theorem for Witt vectors on $\Z_{\geq 0}^n - 0$}

%    Only \author and \address are required; other information is
%    optional.  Remove any unused author tags.

%    author one information
% \author[short version for running head]{name for top of paper}
\author{Kirsten Wickelgren}
\address{Dept. of Mathematics, Harvard University, Cambridge~MA}
\curraddr{School of Mathematics,
Georgia Institute of Technology, Atlanta, Georgia 30332}
\email{wickelgren@post.harvard.edu}
\thanks{Supported by an American Institute of Mathematics Five Year Fellowship.}

%    author two information
%\author{}
%\address{}
%\curraddr{}
%\email{}
%thanks{}

\subjclass[2010]{Primary 13F35, Secondary 19D55. }
%    The 2010 edition of the Mathematics Subject Classification is
%    now available.  If you are citing a classification from the
%    new scheme, use the following input coding instead.
%\subjclass[2010]{Primary }

\date{August 8, 2012 and, in revised form, November 14, 2013.}

\begin{abstract}
We show that the dual of the Witt vectors on $\Z_{\geq 0}^n - 0$ as defined by Angeltveit, Gerhardt, Hill, and Lindenstrauss represent the functor taking a commutative formal group $G$ to the maps of formal schemes $\Ahat^n \to G$, and that the Witt vectors are self-dual for $\mathbb{Q}$-algebras or when $n=1$.
\end{abstract}

\maketitle

\section{Introduction}

Hesselholt and Madsen computed the relative $\rK$-theory of $k[x]/\langle x^a \rangle$ for $k$ a perfect field of positive characteristic in \cite{HM}, and give the answer in terms of the Witt vectors of $k$. In the analogous computation for the ring $$A= k[x_1, \ldots, x_n]/\langle x_1^{a_1}, \ldots, x_n^{a_n} \rangle,$$ Angeltveit, Gerhardt, Hill, and Lindenstrauss define an $n$-dimensional version of the Witt vectors, which they use to express the relative $\rK$-theory and topological cyclic homology of $A$ \cite{AGHL}. 

We show that the Cartier dual of the additive group underlying their Witt vectors on the truncation set $\Z_{\geq 0}^n - 0$, denoted $\W_{\Z_{\geq 0}^n - 0}$, represents the functor taking a commutative formal group $G$ to the pointed maps of formal schemes $\Ahat^n \to G$ (Theorem \ref{CFT_dual_analogue}). We also show that the additive group of $\W_{\Z_{\geq 0}^n - 0}$ is self dual (Lemma \ref{Wn=Wnstar}) when $n=1$ or $R$ is a $\mathbb{Q}$-algebra. Combining these results implies that the additive formal group of $\W_{\Z_{\geq 0}^n - 0}$ represents the functor sending $G$ to the group of maps $\Ahat^n \to G$ when $n=1$ or $R$ is a $\mathbb{Q}$-algebra. The case of $n=1$ is Cartier's first theorem \cite{C} \cite[Th. 27.1.14]{H} on the classical Witt vectors. 

%We show that the additive formal group underlying their Witt vectors on the truncation set $\Z_{\geq 0}^n - 0$, denoted $\W_{\Z_{\geq 0}^n - 0}$, represents the functor taking a commutative formal group $G$ to the pointed maps of formal schemes $\Ahat^n \to G$ (Theorem \ref{CFTanalogue}). The case of $n=1$ is Cartier's first theorem \cite{C} \cite[Th. 27.1.14]{H} on the classical Witt vectors. We also show that the additive group of $\W_{\Z_{\geq 0}^n - 0}$ is self dual (Lemma \ref{Wn=Wnstar}). 

{\bf Acknowledgements.} This paper is a result of Teena Gerhardt's lovely talk at the Stanford Symposium on Algebraic Topology, and a discussion with Michael Hopkins about it. There are ideas of Hopkins in this paper, and I warmly thank him for them. I also thank Gerhardt, Michael Hill, Joseph Rabinoff, and the referee for useful comments. As always, my gratitude and admiration for Gunnar Carlsson are difficult to express.

\section{Cartier's first theorem for Witt vectors on $\Z_{\geq 0}^n - 0$}

Here is Angeltveit, Gerhardt, Hill, and Lindenstrauss's $n$-dimensional version of the Witt vectors, defined in Section $2$ of \cite{AGHL}: a set $S \subseteq \Z_{\geq 0}^n - 0$ is a {\em truncation set} if $(k j_1, k j_2, \ldots, k j_n)$ in $S$ for $k \in \N = \Z_{>0}$ implies that $(j_1, j_2, \ldots, j_n)$ is in $S$. For $\vec{J} = (j_1, \ldots, j_n)$ in $ \Z_{\geq 0}^n - 0$, let $\gcd(\vec{J})$ denote the greatest common divisor of the non-zero $j_i$. Given a ring $R$ and a truncation set $S$, let the Witt vectors $\W_S(R)$ be the ring with underlying set $R^S$ and addition and multiplication defined so that the ghost map $$\W_S(R) \to R^S $$ that takes $\{r_{\vec{I}} : \vec{I} \in S \}$ to $\{w_{\vec{I}}: \vec{I} \in S \}$ where $$ w_{\vec{I}} = \sum_{k \vec{J} = \vec{I}} \gcd(\vec{J}) r_{\vec{J}}^k$$ is a ring homomorphism, where in the above sum, $k$ ranges over $\N$ and $\vec{J}$ is in $S$. In \cite{AGHL}, one requires $S$ to be a subset of $\N^n$, but the same proof  that there is a unique functorial way to define such a ring structure \cite[Lem 2.3]{AGHL} holds for $S \subseteq \Z_{\geq 0}^n - 0$. Note that $$\W_S(R) = \prod_{Z \subsetneq \{1,\ldots n\}} \W_{S_Z}(R)$$ where $S_Z$ is defined $S_Z = \{(j_1, \ldots, j_n) \in S : j_i = 0\textrm{ if and only if } i \in Z \}$, and that for $S = \Z_{\geq 0}^n - 0$, we have $\W_{S_Z}(R) \cong \W_{\N^m}(R)$ with $m = n - \vert Z \vert$.

Let $R$ be a ring. For any truncation set $S$, the additive group underlying the ring $\W_S(R)$ determines a commutative group scheme and formal group over $R$. 

Let $\Ahat^n = \Spf R [[t_1, t_2,\ldots, t_n]]$ be formal affine $n$-space and consider $\Ahat^n$ as a pointed formal scheme, equipped with the point $\Spf R \to \Ahat^n$ corresponding to the ideal $\langle t_1, \ldots t_n \rangle$. Let $\Mor_{\fs}(\Ahat^n, G)$ denote the morphisms of pointed formal schemes over $R$ from $\Ahat^n$ to a pointed formal $R$-scheme $G$. The identity of a formal group $G$ gives $G$ the structure of a pointed formal scheme.

For commutative formal groups $G_1$ and $G_2$ over $R$, let $\Mor_{\fg}(G_1, G_2)$ denote the corresponding morphisms. 

\begin{theorem}\label{CFTanalogue}Suppose $R$ is a $\mathbb{Q}$-algebra or $n=1$. The additive formal group of $\W_{\Z_{\geq 0}^n - 0}(R)$ represents the functor$$G \mapsto \Mor_{\fs}(\Ahat^n,G)$$  from commutative formal groups over $R$ to groups, i.e. there is a natural identification $$\Mor_{\fg}(\W_{\Z_{\geq 0}^n - 0}(R), G) \cong \Mor_{\fs}(\Ahat^n,G)$$ for commutative formal groups $G$ over $R$.\end{theorem}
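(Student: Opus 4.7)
The plan is to deduce this directly from the two main results of the paper, Theorem \ref{CFT_dual_analogue} and Lemma \ref{Wn=Wnstar}. By Theorem \ref{CFT_dual_analogue}, the Cartier dual $(\W_{\Z_{\geq 0}^n - 0}(R))^*$ of the additive formal group of the Witt vectors represents the functor
$$ G \mapsto \Mor_{\fs}(\Ahat^n, G) $$
on commutative formal groups $G$ over $R$, so there is a natural identification
$$ \Mor_{\fg}\bigl((\W_{\Z_{\geq 0}^n - 0}(R))^*, G\bigr) \cong \Mor_{\fs}(\Ahat^n, G). $$

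Under the theorem's hypothesis that $n=1$ or $R$ is a $\mathbb{Q}$-algebra, Lemma \ref{Wn=Wnstar} supplies a natural isomorphism $\W_{\Z_{\geq 0}^n - 0}(R) \cong (\W_{\Z_{\geq 0}^n - 0}(R))^*$ of commutative formal groups. Substituting this isomorphism into the source of the previous display and precomposing yields the required natural identification
$$ \Mor_{\fg}\bigl(\W_{\Z_{\geq 0}^n - 0}(R), G\bigr) \cong \Mor_{\fs}(\Ahat^n, G). $$
Naturality in $G$ is inherited from the naturality of the two inputs.

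The real content of the theorem is packaged into the two cited results, so at this stage there is no further obstacle. The substantive difficulties live in Theorem \ref{CFT_dual_analogue}, where one must identify the (topological) Hopf algebra dual to $\W_{\Z_{\geq 0}^n - 0}$ with the one whose comodule structure controls pointed formal scheme maps out of $\Ahat^n$, and in Lemma \ref{Wn=Wnstar}, where the self-duality forces either the rationality hypothesis or the classical one-dimensional case. Once both are available, the theorem follows by direct substitution.
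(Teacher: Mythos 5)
Your overall strategy matches the paper's in outline — Theorem \ref{CFT_dual_analogue} plus a self-duality statement — but there is a genuine gap in your bridging step: Lemma \ref{Wn=Wnstar} does \emph{not} supply an isomorphism of commutative formal groups $\W_{\Z_{\geq 0}^n - 0}(R) \cong (\W_{\Z_{\geq 0}^n - 0}(R))^{\star}$. What it gives is an isomorphism of \emph{graded Hopf algebras} $B \cong B^*$, where $B^* = \oplus_m \Gr_m B^*$ is the graded (direct-sum) dual; that is a self-duality of the affine additive \emph{group scheme} of the Witt vectors. The object appearing in Theorem \ref{CFT_dual_analogue} is the \emph{Cartier} dual $B^{\star} = \prod_m \Gr_m B^*$, a topological Hopf algebra, i.e. a formal group, and the object in the statement of Theorem \ref{CFTanalogue} is the additive \emph{formal} group of $\W_{\Z_{\geq 0}^n - 0}(R)$ (a completion), not the group scheme itself. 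So your substitution identifies objects living in different categories — the paper explicitly flags this distinction when it contrasts $B^* = \oplus_m \Gr_m B^*$ with $B^{\star} = \prod_m \Gr_m B^*$ — and as written the step fails.

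The missing ingredient is precisely the paper's Lemma \ref{fgW_as_topHopf}, which is what the paper actually combines with Theorem \ref{CFT_dual_analogue} to prove Theorem \ref{CFTanalogue}. Its proof uses Lemma \ref{Wn=Wnstar} together with a completion argument: since the grading on $F(R[[t_1, \ldots, t_n]]^{\star})$ is free and of finite rank in each degree, the Cartier dual is the degreewise product of the graded dual pieces; by the graded self-duality of Lemma \ref{Wn=Wnstar}, this product is $R[[ b_{\vec{I}} : \vec{I} \in \Z_{\geq 0}^n ]] / \langle b_{\vec{0}} - 1\rangle$, the power-series completion of the polynomial Hopf algebra, which is exactly the ring of functions of the formal group associated to the additive group of $\W_{\Z_{\geq 0}^n - 0}(R)$. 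With that lemma in hand, your substitution argument goes through verbatim; without it, the passage from the $\oplus$-dual to the $\prod$-dual is unaccounted for, and that passage is where the completion from group scheme to formal group actually happens.
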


Theorem \ref{CFTanalogue} is proven by combining Theorem \ref{CFT_dual_analogue} and Lemma \ref{fgW_as_topHopf} below.

Cartier duality gives a contravariant equivalence between certain topological $R$-algebras and $R$-coalgebras \cite[Prop 37.2.7]{H}. For such a topological $R$-algebra (respectively coalgebra) $B$, let $B^{\star}$ denote its Cartier dual $$B^{\star} = \Mor_R(B, R) $$ where $\Mor_R(B,R)$ denotes the continuous $R$-module homomorphisms from $B$ to $R$ (respectively the $R$-module homomorphisms from $B$ to $R$). Say that an algebra or coalgebra is {\em augmented} if it is equipped with a splitting of the unit or counit map. It is straightforward to see that Cartier duality induces an equivalence between augmented topological $R$-algebras satisfying the conditions of \cite[37.2.4]{H} and augmented $R$-coalgebras satisfying the conditions of \cite[37.2.5]{H}. Denote the morphisms in the former category by $\Mor_{\topalg}(-,-)$ and the morphisms in the latter category by $\Mor_{\coalg}(-,-)$.

The commutative group scheme determined by the additive group underlying $\W_S(R)$ has a Cartier dual $\W_S(R)^{\star}$ which is a topological Hopf algebra or formal group.

\begin{theorem}\label{CFT_dual_analogue}The Cartier dual of the additive group scheme of $\W_{\Z_{\geq 0}^n - 0}(R)$ represents the functor$$G \mapsto \Mor_{\fs}(\Ahat^n,G)$$  from commutative formal groups over $R$ to groups, i.e. there is a natural identification $$\Mor_{\fg}(\W_{\Z_{\geq 0}^n - 0}(R)^{\star}, G) \cong \Mor_{\fs}(\Ahat^n,G)$$ for commutative formal groups $G$ over $R$.\end{theorem}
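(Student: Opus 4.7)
My approach combines Cartier duality with compatible product decompositions on both sides of the claimed identification.

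First, Cartier duality \cite[Prop. 37.2.7]{H} gives a natural bijection between morphisms of commutative formal groups $\W_S^{\star} \to G$ and morphisms of commutative group schemes $G^{\star} \to \W_S$. The product decomposition $\W_S = \prod_{Z \subsetneq \{1,\ldots,n\}} \W_{S_Z}$ (with $S_Z \cong \N^{n-|Z|}$) stated in the excerpt, together with the fact that finite products are biproducts in the additive category of commutative group schemes, yields
$$\Mor_{\fg}(\W_S^{\star}, G) \cong \prod_{Z \subsetneq \{1,\ldots,n\}} \Mor_{\fg}(\W_{\N^{n-|Z|}}^{\star}, G).$$

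Next, I would give a parallel decomposition on the pointed formal scheme side. For $\phi \colon \Ahat^n \to G$ and $Z \subseteq \{1,\ldots,n\}$, let $\phi|_Z \colon \Ahat^{n-|Z|} \to G$ denote the restriction obtained by setting $t_i = 0$ for $i \in Z$. Iterated M\"obius inversion in $G$'s formal group law extracts a strict part $Q_Z(\phi) \colon \Ahat^{n-|Z|} \to G$---one whose further restriction to any proper sub-plane is zero---and $\phi$ is recovered from the collection $(Q_Z(\phi))_Z$ by iterated application of $G$'s formal group law. Since every restriction is a group homomorphism with respect to $G$, so is each $Q_Z$, giving a natural group isomorphism
$$\Mor_{\fs}(\Ahat^n, G) \cong \prod_{Z \subsetneq \{1,\ldots,n\}} \Mor_{\fs}^{{\text{strict}}}(\Ahat^{n-|Z|}, G).$$

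It then suffices to show, for each $m \geq 1$, the factor identification $\Mor_{\fg}(\W_{\N^m}^{\star}, G) \cong \Mor_{\fs}^{{\text{strict}}}(\Ahat^m, G)$ naturally in $G$. The case $m = 1$ reduces to classical Cartier's first theorem \cite[Th. 27.1.14]{H} together with the self-duality in Lemma \ref{Wn=Wnstar}. For $m \geq 2$, I would construct the universal strict $m$-curve $\iota_m \colon \Ahat^m \to \W_{\N^m}^{\star}$, defined on coordinate rings by $t_{\vec{I}} \mapsto t^{\vec{I}} = t_1^{j_1} \cdots t_m^{j_m}$, and verify that every strict pointed map $\Ahat^m \to G$ lifts uniquely to a formal group map $\W_{\N^m}^{\star} \to G$ extending $\iota_m$.

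The main obstacle is precisely this higher-dimensional factor identification. My expectation is to first verify the case $G = \G_m$ using the multi-variable analogue of the Artin--Hasse isomorphism identifying $\W_{\N^m}(R)$ with the group of strict units $1 + J_m \subset R[[t_1, \ldots, t_m]]^{\times}$, where $J_m$ is the ideal of power series vanishing whenever any $t_i$ is set to zero, and then to extend to general $G$ via a naturality argument that exploits the Hopf-algebraic universal property of $\W_{\N^m}^{\star}$.
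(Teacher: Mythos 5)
Your reduction scheme is sound as far as it goes: the M\"obius-inversion decomposition of $\Mor_{\fs}(\Ahat^n,G)$ into strict parts indexed by $Z \subsetneq \{1,\ldots,n\}$ is correct (commutativity of $G$ makes the alternating sums well defined and the restriction maps are homomorphisms), and it matches the product decomposition $\W_S = \prod_Z \W_{S_Z}$, so the theorem does reduce to the factor identification $\Mor_{\fg}(\W_{\N^m}(R)^{\star}, G) \cong \Mor^{\mathrm{strict}}_{\fs}(\Ahat^m, G)$. But that identification, for $m \geq 2$, all $G$, and all $R$, is the entire content of the theorem, and your plan for it has a genuine gap: verifying the case $G = \widehat{\G}_m$ (the strict-units computation $\W_{\N^m}(R) \cong 1 + J_m$) and then ``extending to general $G$ via a naturality argument'' does not work. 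Naturality in $G$ only transports information along morphisms of formal groups, and over a general base a commutative formal group need not admit an embedding into a product of copies of $\widehat{\G}_m$, so there is no diagram through which the $\widehat{\G}_m$ computation propagates; representability of $G \mapsto \Mor^{\mathrm{strict}}_{\fs}(\Ahat^m,G)$ cannot be checked on a single test object. Producing, for each strict $\phi \colon \Ahat^m \to G$, the unique formal-group lift along your universal curve $\iota_m$ is exactly what must be proved, and the proposal supplies no mechanism for it. (Two smaller points: your Cartier-duality step presupposes $G$ affine, and the paper needs a closing sheaf-on-$\Spf R$ argument to pass from affine to general $G$; and your $m=1$ step implicitly uses not just Lemma \ref{Wn=Wnstar} but the identification of $\W_{\N}(R)^{\star}$ with the formal Witt group, i.e.\ Lemma \ref{fgW_as_topHopf} --- fine for $m=1$, but unavailable for $m \geq 2$ integrally, since by the paper's Remark self-duality fails when $2$ is not invertible, so no self-duality shortcut can exist for the higher factors.)

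The missing idea, which is how the paper proves the statement uniformly in $G$ and without any decomposition, is to dualize the mapping space itself rather than computing on test objects: for affine $G = \Spf A$ one has $\Mor_{\fs}(\Ahat^n,G) = \Mor_{\topalg}(A, R[[t_1,\ldots,t_n]]) = \Mor_{\coalg}(R[[t_1,\ldots,t_n]]^{\star}, A^{\star})$, and the free Hopf algebra functor $F$ (left adjoint to the forgetful functor from Hopf algebras to augmented coalgebras) converts the latter to $\Mor_{\Hopf}(F(R[[t_1,\ldots,t_n]]^{\star}), A^{\star}) = \Mor_{\fg}(\Spf F(R[[t_1,\ldots,t_n]]^{\star})^{\star}, G)$. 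This adjunction is precisely the device that produces the unique group-map lift of a pointed map for \emph{every} $G$ simultaneously; the only computation then needed is Lemma \ref{Wn=Fstar}, identifying $F(R[[t_1,\ldots,t_n]]^{\star})$ with the Witt Hopf algebra via the unique factorization $\prod_{\vec{I}}(1 - a_{\vec{I}} t^{\vec{I}})$ and the logarithm/ghost-coordinate calculation. Your multi-variable Artin--Hasse identification is exactly the $\widehat{\G}_m$-shadow of that lemma; to close the gap you would need to promote it to the Hopf-algebra statement and run it through the adjunction, at which point you would have reconstructed the paper's proof and your strict-parts decomposition becomes unnecessary.
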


\begin{proof}
First assume that the formal group $G$ is affine. Let $A$ denote the functions of $G$, so $A$ is a Hopf algebra and $G= \Spf A$. $$\Mor_{\fs}(\Ahat^n, G) = \Mor_{\topalg}(A, R[[t_1, t_2, \ldots, t_n]]).$$\hidden{ where the right hand side denotes the continuous $R$-algebra morphisms respecting the augmentations $R[[t_1, t_2, \ldots, t_n]] \to R$ and $A \to R$ from the distinguished points on the corresponding formal schemes.} 

By Cartier duality, $$\Mor_{\topalg}(A, R[[t_1, t_2, \ldots, t_n]]) = \Mor_{\coalg}(R[[t_1, t_2, \ldots, t_n]]^{\star}, A^{\star}).$$

Let $F$ denote the left adjoint to the functor taking a Hopf algebra (as defined \cite[37.1.7]{H}) to its underlying augmented coalgebra. Since $A$ is a Hopf algebra, so is $A^{\star}$. Therefore,  \begin{align*}& \Mor_{\coalg}(R[[t_1, t_2, \ldots, t_n]]^{\star}, A^{\star}) = \Mor_{\Hopf}(F(R[[t_1, t_2, \ldots, t_n]]^{\star}), A^{\star})= \\  & \Mor_{\topHopf}(A,F(R[[t_1, t_2, \ldots, t_n]]^{\star})^{\star} ) = \Mor_{fg}(\Spf F(R[[t_1, t_2, \ldots, t_n]]^{\star})^{\star}, G),\end{align*} where $\Mor_{\topHopf}(-,-)$ denotes morphisms of topological Hopf algebras whose underlying topological $R$-algebra is as before.

%By Lemma \ref{fgW_as_topHopf} proven below, the formal group $\Spf F(R[[t_1, t_2, \ldots, t_n]]^{\star})^{\star}$ is isomorphic to the formal group associated to the additive group of $\W_{\Z_{\geq 0}^n - 0}(R)$, which we also denote by $\W_{\Z_{\geq 0}^n - 0}(R)$.

By Lemma \ref{Wn=Fstar} proven below, the formal group $\Spf F(R[[t_1, t_2, \ldots, t_n]]^{\star})^{\star}$ is isomorphic to the Cartier dual of the additive group scheme of $\W_{\Z_{\geq 0}^n - 0}(R)$.

Thus $\W_{\Z_{\geq 0}^n - 0}(R)^{\star}$ represents the functor $G \mapsto \Mor_{\fs}(\Ahat^n,G)$ restricted to affine commutative formal groups $G$. Since $\W_{\Z_{\geq 0}^n - 0}(R)^{\star}$ is an affine formal group, the identity morphism determines an element of $\Mor_{fs}(\Ahat^n,\W_{\Z_{\geq 0}^n - 0}(R)^{\star})$, which in turn defines a natural transformation $$\eta: \Mor_{\fg}(\W_{\Z_{\geq 0}^n - 0}(R)^{\star}, -) \to \Mor_{\fs}(\Ahat^n,-) .$$ For any formal group $G$, the sets $\Mor_{\fg}(\W_{\Z_{\geq 0}^n - 0}(R)^{\star}, G)$ and $\Mor_{\fs}(\Ahat^n,G)$ extend to sheaves on $\Spf R$. Since locally on $\Spf R$, every formal group $G$ is affine, $\eta$ is a natural isomorphism.

\end{proof}

\begin{lemma}\label{Wn=Fstar} The group scheme determined by the Hopf algebra $$F(R[[t_1, t_2, \ldots, t_n]]^{\star})$$ is isomorphic to the additive group scheme of $\W_{\Z_{\geq 0}^n - 0}(R)$.\end{lemma}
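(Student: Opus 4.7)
The plan is to realize both $\Spec F(R[[t_1, \ldots, t_n]]^{\star})$ and the additive group scheme of $\W_{\Z_{\geq 0}^n - 0}(R)$ as the group scheme $\Lambda$ whose $B$-points are the $1$-units $1 + \mathfrak{m} \cdot B[[t_1, \ldots, t_n]]$, with $\mathfrak{m} = (t_1, \ldots, t_n)$, under multiplication of power series.

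First I would describe $F(C)$ for $C := R[[t_1, \ldots, t_n]]^{\star}$ explicitly. The coalgebra $C$ has the basis $\{e_{\vec{J}}\}_{\vec{J} \in \Z^n_{\geq 0}}$ dual to the monomial basis of $R[[t_1, \ldots, t_n]]$, with augmentation $\epsilon(e_{\vec{J}}) = \delta_{\vec{J}, \vec{0}}$ and coproduct $\Delta e_{\vec{J}} = \sum_{\vec{I} + \vec{K} = \vec{J}} e_{\vec{I}} \otimes e_{\vec{K}}$ dual to multiplication. The left adjoint is realized by $F(C) = \Sym_R(C^+) = R[e_{\vec{J}} : \vec{J} \neq 0]$ as a commutative algebra, with coalgebra structure extending $\Delta|_{C^+}$ multiplicatively after identifying $e_{\vec{0}}$ with $1 \in F(C)$. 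For any $R$-algebra $B$, an algebra map $F(C) \to B$ determines and is determined by the power series $h = 1 + \sum_{\vec{J} \neq 0} f(e_{\vec{J}}) t^{\vec{J}} \in \Lambda(B)$, and the coproduct formula for $e_{\vec{J}}$ translates into the Cauchy product formula. Hence $\Spec F(C) \cong \Lambda$ as group schemes.

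Next I would construct a morphism $\psi \colon \W_{\Z_{\geq 0}^n - 0} \to \Lambda$ by
$$\psi\bigl((a_{\vec{I}})_{\vec{I} \neq 0}\bigr) \;=\; \prod_{\vec{I} \in \Z_{\geq 0}^n - 0}\bigl(1 - a_{\vec{I}} t^{\vec{I}}\bigr),$$
a formal product in which only finitely many factors contribute to any given coefficient. The coefficient of $t^{\vec{J}}$ in $\psi(a)$ is $-a_{\vec{J}}$ plus a polynomial in the $a_{\vec{I}}$ with $\vec{I}$ strictly below $\vec{J}$ in the entrywise partial order, so the induced ring map on functions is triangular with $\pm 1$ on the diagonal and is an isomorphism of schemes.

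Finally I would check that $\psi$ is a group homomorphism. Since $\psi(a + b) = \psi(a)\psi(b)$ is a polynomial identity in $\Z[a_{\vec{I}}, b_{\vec{I}}]$, it suffices to verify it after base change to $\Q$. Over $\Q$ the ghost map identifies $\W_{\Z_{\geq 0}^n - 0}$ additively with $\prod_{\vec{J} \neq 0} \G_a$, and the formal logarithm identifies $\Lambda$ additively with $\mathfrak{m} R[[t_1, \ldots, t_n]]$. A direct expansion gives
$$-\log \psi(a) \;=\; \sum_{\vec{I} \neq 0,\ k \geq 1} \frac{a_{\vec{I}}^k}{k}\, t^{k\vec{I}},$$
whose coefficient of $t^{\vec{J}}$ equals $w_{\vec{J}}(a)/\gcd(\vec{J})$, using $\gcd(\vec{I}) = \gcd(\vec{J})/k$ whenever $k \vec{I} = \vec{J}$. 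In these additive coordinates $\psi$ becomes the diagonal linear map scaling the $\vec{J}$-th coordinate by $1/\gcd(\vec{J})$, which is an obvious group isomorphism. The main obstacle is precisely this ghost/logarithm comparison combined with the polynomial-identity descent from $\Q$ to arbitrary $R$; the other input needed is the explicit form of $F$ from \cite[37.1.7]{H}.
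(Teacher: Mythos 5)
Your proposal is correct and follows essentially the same route as the paper: identify $F(R[[t_1,\ldots,t_n]]^{\star})$ with the Hopf algebra of the group scheme of power series with constant term $1$, use the unique factorization $\prod_{\vec{I}}(1-a_{\vec{I}}t^{\vec{I}})$ to get polynomial coordinates, and verify the additivity of the ghost/Witt polynomials via the formal logarithm after reducing to characteristic zero (your descent through the integral polynomial identity in $\Z[a_{\vec{I}},b_{\vec{I}}]$ is the same device as the paper's reduction to a free ring embedded in its fraction field). The only cosmetic difference is that you package the conclusion as an explicit isomorphism $\psi$ checked on ghost coordinates, while the paper phrases it as the Witt polynomials being primitive elements for the comultiplication, which are equivalent formulations of the same computation.
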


\begin{proof}
For notational convenience, given $\vec{I}=(i_1,i_2,\ldots,i_n)$ and $\vec{J}=(j_1,\ldots,j_n)$ in $\Z_{\geq 0}^n$, let $t^{\vec{I}} = t_1^{i_1}t_2^{i_2} \cdots t_n^{i_n}$, and write $\vec{I} \leq \vec{J}$ when $i_k \leq j_k$ for all $k$.

$R[[t_1, t_2, \ldots, t_n]]^{\star}$ is a free $R$-module on the basis $\{ b_{\vec{I}} : \vec{I}=(i_1,i_2,\ldots,i_n) \in \Z_{\geq 0}^n\}$ where $b_{\vec{I}}$ is dual to $t_1^{i_1}t_2^{i_2} \cdots t_n^{i_n}$.  The $R$-coalgebra structure is given by\hidden{$$\sum_{(i_1,i_2,\ldots,i_n) \in \Z_{\geq 0}^n} a_{i_1,i_2,\ldots,i_n}b_{i_1,i_2,\ldots,i_n} \mapsto a_{0,0,\ldots,0}$$ and } the comultiplication \begin{equation}\label{R[t]starcomult} b_{\vec{I}} \mapsto \sum_{0 \leq \vec{J} \leq \vec{I}} b_{\vec{J}} \otimes b_{\vec{I} - \vec{J}},\end{equation} and the augmentation $R \to R[[t_1, t_2, \ldots, t_n]]^{\star}$ sends $r$ to $rb_{\vec{0}}$. 

It follows that $F(R[[t_1, t_2, \ldots, t_n]]^{\star})$ is the polynomial algebra $$R[ b_{\vec{I}} : \vec{I} \in \Z_{\geq 0}^n ] / \langle b_{\vec{0}} - 1\rangle$$ with comultiplication equal to the $R$-algebra morphism determined by \eqref{R[t]starcomult}. Thus, for any $R$-algebra $B$ $$\Mor_{\alg}(F(R[[t_1, t_2, \ldots, t_n]]^{\star}), B)$$ is the group under multiplication of power series in $n$ variables $t_1,t_2,\ldots,t_n$ with leading coefficient $1$ and coefficients in $B$ \begin{equation}\label{Bpwr_series}\{ 1+ \sum_{\vec{I} \in \Z_{\geq 0}^n- 0} b_{\vec{I}} t^{\vec{I}}: b_{\vec{I}} \in B \}.\end{equation}

Any such power series can be written uniquely in the form \begin{equation}\label{aipoly} \prod_{\vec{I} \in \Z_{\geq 0}^n - 0} (1 - a_{\vec{I}} t^{\vec{I}})\end{equation} with $a_{\vec{I}} \in B$. It follows that $F(R[[t_1, t_2, \ldots, t_n]]^{\star})$ is isomorphic as a Hopf algebra to the polynomial algebra $R[a_{\vec{I}} : \vec{I} \in \Z_{\geq 0}^n - 0]$ with comultiplication determined by multiplication of power series of the form \eqref{aipoly}. By the definition of the Witt vectors, it suffices to show that the Witt polynomials $ \sum_{k \vec{J} = \vec{I}} \gcd(\vec{J}) a_{\vec{J}}^k$ are primitives for this comultiplication for all $\vec{I}$ in $\Z_{\geq 0}^n- 0$. To show this, we may assume that $R$ is a free ring, since every ring is a quotient of a free ring. Then $R$ embeds into its field of fractions, so we may further assume that $k$ is invertible for all $k \in \Z_{> 0}$. Note that \begin{align*}\log \prod_{\vec{I} \in \Z_{\geq 0}^n - 0} (1 - a_{\vec{I}} t^{\vec{I}}) & = - \sum_{\vec{I} \in \Z_{\geq 0}^n- 0} \sum_{k \in \N}  \frac{a_{\vec{I}}^k}{k} t^{k\vec{I}} \\ & = - \sum_{\vec{I} \in \Z_{\geq 0}^n- 0} \sum_{k \vec{J} = \vec{I}} \frac{a_{\vec{J}}^k}{k}  t^{\vec{I}} \\ & = \sum_{\vec{I} \in \Z_{\geq 0}^n- 0} \big(\sum_{k \vec{J} = \vec{I}} \gcd(\vec{J}) a_{\vec{J}}^k \big) ~\frac{- t^{\vec{I}}}{\gcd(\vec{I})}.\end{align*}

Thus the group under multiplication with elements \eqref{aipoly} is isomorphic to the group with elements $\{ a_{\vec{I}} \in B\} $ and whose group operation is such that $$ \big(\sum_{k \vec{J} = \vec{I}} \gcd(\vec{J}) a_{\vec{J}}^k \big)$$ is an additive homomorphism, i.e. the Witt polynomials $ \sum_{k \vec{J} = \vec{I}} \gcd(\vec{J}) a_{\vec{J}}^k$ are indeed primitives as desired.
\end{proof}

The additive group scheme of $\W_{\Z_{\geq 0}^n - 0}(R)$ corresponds to a {\em graded} Hopf algebra, meaning that there is a grading on the underlying $R$-module such that the structure maps are maps of graded $R$-modules. This grading can be defined by giving $a_{\vec{J}}$ as in Lemma \ref{Wn=Fstar} degree $j_1+j_2 + \ldots + j_n$.  A graded Hopf algebra $B$ whose underlying graded $R$-module is free and finite rank in each degree has a graded Hopf algebra dual $B^*$ which we define to have $m$th graded piece $\Gr_m B^* = \Hom_R(\Gr_m B,R)$ and $$B^* = \oplus_m  \Gr_m B^*.$$ Note the difference with the Cartier dual $$B^{\star} = \prod_m  \Gr_m B^*.$$ Say that a graded Hopf algebra $B$ is {\em self dual} if there is an isomorphism $B \cong B^*$. An affine group scheme corresponding to a graded Hopf algebra will be called {\em self dual} if its corresponding graded Hopf algebra is self dual.

\begin{lemma}\label{Wn=Wnstar}The graded additive group scheme of $\W_{\Z_{\geq 0}^n - 0}(R)$ is self dual if $R$ is a $\mathbb{Q}$-algebra or if $n=1$. \end{lemma}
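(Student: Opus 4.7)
The plan is to combine Lemma~\ref{Wn=Fstar}, which identifies the graded Hopf algebra $H$ underlying the additive group scheme of $\W_{\Z_{\geq 0}^n - 0}(R)$ with the polynomial ring $R[a_{\vec I} : \vec I \in \Z_{\geq 0}^n - 0]$ graded by $\deg a_{\vec I} = i_1 + \cdots + i_n$ and with the Witt polynomials $w_{\vec I}$ primitive, with a separate self-duality argument in each of the two cases.

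For the $\mathbb{Q}$-algebra case, I would first note that the ghost map is a ring isomorphism when $R$ is a $\mathbb{Q}$-algebra, so the additive group scheme of $\W_{\Z_{\geq 0}^n - 0}(R)$ is a product of copies of $\mathbb{G}_a$ and $H$ equals the symmetric algebra $\Sym(V)$, where $V = \bigoplus_{\vec I} R \cdot w_{\vec I}$ is the graded free $R$-module spanned by the primitives. Each graded piece $V_m$ is free of finite rank $\binom{m + n - 1}{n - 1}$, so $V \cong V^*$ as graded $R$-modules. The graded Hopf algebra dual of $\Sym(V)$ (with $V$ primitive) is the divided power algebra $\Gamma(V^*)$, and over a $\mathbb{Q}$-algebra the exponential $w^{(k)} \mapsto w^k / k!$ is a Hopf algebra isomorphism $\Gamma(V^*) \cong \Sym(V^*)$. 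Chaining these gives $H^* \cong \Gamma(V^*) \cong \Sym(V^*) \cong \Sym(V) = H$.

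For $n = 1$, I would expand $\prod_{i \geq 1}(1 - a_i t^i) = 1 + \sum_{n \geq 1} c_n t^n$ to obtain alternative polynomial generators $c_n$ of $H$, each of degree $n$. Multiplication of power series translates to $\Delta(c_n) = \sum_{i + j = n} c_i \otimes c_j$ with $c_0 = 1$, which identifies $H$ with $\Lambda_R := \Lambda \otimes_\Z R$, the ring of symmetric functions over $R$ (sending $c_n$ to the complete symmetric function $h_n$). The classical self-duality of $\Lambda$ as a graded Hopf algebra over $\Z$ via the Hall pairing, for which the Schur functions $s_\lambda$ form an orthonormal basis, then base-changes to the desired isomorphism $H \cong H^*$.

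The main obstacle I anticipate is verifying the identifications of graded duals cleanly: in the $\mathbb{Q}$-algebra case this rests on the facts $(\Sym V)^* = \Gamma(V^*)$ and $\Gamma \cong \Sym$ in characteristic zero, while for $n = 1$ it is absorbed into the classical symmetric functions story once the comultiplications have been matched. A minor worry in the $n=1$ argument is that the base change of the Hall self-duality from $\Z$ to $R$ is strictly at the level of graded Hopf algebras with each graded piece free of finite rank; since $\Lambda$ has this property, no further subtleties arise.
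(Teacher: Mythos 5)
Your proposal is correct, but it takes a genuinely different route from the paper's proof. The paper handles both cases with one explicit map: using Lemma~\ref{Wn=Fstar} to write the Hopf algebra as $F(R[[t_1,\ldots,t_n]]^{\star}) = R[b_{\vec{I}}]$, it sends $b_{\vec{I}}$ to the dual-basis element $C_{\vec{I}} = c_{e_1^{i_1}\cdots e_n^{i_n}}$, observes this is a morphism of Hopf algebras, and then proves the $C_{\vec{I}}$ are free polynomial generators of the graded dual --- for $n=1$ by a double induction on degree and length, and for $\mathbb{Q}$-algebras by pulling the $n=1$ generation statement back along $t \mapsto t_1+\cdots+t_n$, where the multinomial coefficients ${m \choose \vec{I}}$ must be inverted (exactly where $\mathbb{Q}\subseteq R$ enters); linear independence follows because a spanning set of the correct finite cardinality in a finite free module is a basis. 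You instead give two unrelated arguments: over a $\mathbb{Q}$-algebra, triangularity of the Witt polynomials (invertibility of $\gcd(\vec{I})$ and $k$) makes the $w_{\vec{I}}$ free primitive generators, so $H=\Sym(V)$ and self-duality follows from $(\Sym V)^* \cong \Gamma(V^*) \cong \Sym(V^*)\cong \Sym(V)$; for $n=1$ you identify $H$ with $\Lambda_R$ via $c_n \mapsto h_n$ and base-change the Hall-pairing self-duality of $\Lambda$ from $\Z$, which is legitimate since each graded piece is free of finite rank. Both are sound: your $\mathbb{Q}$-case is conceptually cleaner, and your $n=1$ case outsources combinatorics the paper's induction essentially redoes (under your identification the paper's map is a form of the classical self-duality of symmetric functions, with $C_m$ corresponding to $m_{(1^m)} = e_m$ since $\mathcal{C}$ is dual to the basis $\{h_\lambda\}$); the paper's approach is self-contained, produces one uniform isomorphism covering both hypotheses, and makes visible which coefficients obstruct self-duality integrally, as its remark exploits for $n=3$ when $2$ is not invertible. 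In a final write-up you should make explicit the Hopf-pairing property $\langle fg,h\rangle = \langle f\otimes g,\Delta h\rangle$ of the Hall inner product, so the induced map $\Lambda \to \Lambda^*$ is an isomorphism of Hopf algebras and not merely of graded modules, and the induction on divisibility showing the $w_{\vec{I}}$ generate freely; both are standard.
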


\begin{proof}
We give an isomorphism of graded Hopf algebras $$F(R[[t_1, t_2, \ldots, t_n]]^{\star}) \cong F(R[[t_1, t_2, \ldots, t_n]]^{\star})^*$$ which is equivalent to the claim by Lemma \ref{Wn=Fstar}. 

We saw above that $F(R[[t_1, t_2, \ldots, t_n]]^{\star})$ is the polynomial algebra $$R[ b_{\vec{I}} : \vec{I} \in \Z_{\geq 0}^n ] / \langle b_{\vec{0}} - 1\rangle$$ with comultiplication determined by \eqref{R[t]starcomult}. Thus, an $R$-basis for $F(R[[t_1, t_2, \ldots, t_n]]^{\star})$ is given by the collection of monomials $ b_{\vec{I}_1}^{m_1} b_{\vec{I}_2}^{m_2} b_{\vec{I}_3}^{m_3} \cdots b_{\vec{I}_k}^{m_k} $ in the variables $\{ b_{\vec{I}} : \vec{I} \in \Z_{\geq 0}^n - 0 \}$.  Let $\mathcal{C} = \{ c_{\vec{I}_1^{m_1} \vec{I}_2^{m_2} \vec{I}_3^{m_3} \cdots \vec{I}_k^{m_k} } : m_j > 0,  \vec{I}_j \in \Z_{\geq 0}^n - 0\}$ denote the dual basis of $F(R[[t_1, t_2, \ldots, t_n]]^{\star})^*$. For notational convenience, we will also write $c_{\vec{I}_1^{m_1} \vec{I}_2^{m_2} \vec{I}_3^{m_3} \cdots \vec{I}_k^{m_k} }$ even when some of the $m_j$ are $0$; it is to be understood that such an expression is identified with the corresponding expression with the $\vec{I}_j^{m_j}$ terms with $m_j=0$ removed.

Let $\{ e_1, e_2, \ldots, e_n \}$ be the standard basis of $\Z^n$, so $e_1 = (1,0,0,\ldots,0)$, $e_2 = (0,1,0,\ldots,0)$ etc.  For notational convenience, for $\vec{M} = (m_1, m_2, \ldots, m_n)$ in $\Z_{\geq 0}^n - 0$, let $C_{\vec{M}}$ abbreviate $c_{e_1^{m_1} e_2^{m_2} \cdots e_n^{m_n}}$. 

Note that $$\mu(C_{\vec{M}}) =  \sum_{0 \leq \vec{J} \leq \vec{M}} C_{\vec{J}} \otimes C_{\vec{M}- \vec{J}}  $$ where $\mu$ denotes the comultiplication of $F(R[[t_1, t_2, \ldots, t_n]]^{\star})^*$.

Sending $b_{\vec{I}}$ to $C_{\vec{I}}$ thus defines a morphism of Hopf algebras $$F(R[[t_1, t_2, \ldots, t_n]]^{\star}) \to F(R[[t_1, t_2, \ldots, t_n]]^{\star})^*,$$ and to prove the lemma it suffices to see that the $C_{\vec{I}}$ are free $R$-algebra generators of $F(R[[t_1, t_2, \ldots, t_n]]^{\star})^*$ when either $n =1$ or $\mathbb{Q} \subseteq R$.

We first show that the $C_{\vec{I}}$ generate $F(R[[t_1, t_2, \ldots, t_n]]^{\star})^*$ as an $R$-algebra in both cases:

First assume that $n=1$. We show that the $C_{m} = c_{e_1^m}$ for $m = 1,2,3, \ldots$ generate $F(R[[t_1]]^{\star})^*$ as an $R$-algebra. An arbitrary element $c$ of $\mathcal{C}$ is of the form $c_{i_1, i_2, \cdots, i_k}$ with the $i_k$ not necessarily distinct in $\Z_{> 0}$. Define the degree of $c$ to be $d=\sum_{j = 1}^k i_j$. Assume by induction that any element of $\mathcal{C}$ of degree less than $d$ is in the subalgebra generated by the $C_m$. Define the length of $c$ to be $k$. The length of $c$ must be less than or equal to $d$. If the length of $c$ equals $d$, then $c_{i_1, i_2, \cdots, i_k} = C_k$ and $c$ is in the subalgebra. So we may assume by induction that any element of $\mathcal{C}$ of degree $d$ and length greater than $k$ is in the subalgebra. The multiplication on $F(R[[t_1]]^{\star})^*$ is dual to $$b_{i_1} b_{i_2} b_{i_3} \cdots b_{i_k} \mapsto \prod_{j = 1}^k (\sum_{0 \leq J \leq i_j} b_{J} \otimes b_{i_j - J} ).$$ Thus the difference $$c -  c_{i_1-1, i_2-1, \cdots, i_k-1} C_k $$ is a sum of terms of degree $d$ and length greater than $k$. It follows by induction that the $C_{m} = c_{e_1^m}$ generate $F(R[[t_1]]^{\star})^*$ as claimed.

Now let $n$ be arbitrary. Consider the map $f: R [[t]] \to R[[t_1,\ldots,t_n]] $ defined by $$f(t) = t_1 + t_2 + \ldots + t_n.$$ There is an induced map $$f:  F(R[[t_1,\ldots,t_n]]^{\star}) \cong R[ b_{\vec{I}} : \vec{I} \in \Z_{\geq 0}^n-0 ]  \to R[b_m : m \in \Z_{> 0}] \cong F(R[[t]]^{\star})$$ which is determined by the following calculation of $f(b_{\vec{I}})$ for $\vec{I} = (i_1, i_2, \ldots, i_n)$. \begin{align*} & f(b_{\vec{I}}) (t^m) = b_{\vec{I}} (f(t^m)) = b_{\vec{I}} (t_1 + \ldots + t_n)^m = \\ & b_{\vec{I}} \big(\sum_{a_1, \ldots a_n \geq 0} { m \choose a_1 a_2 \cdots a_n } t_1^{a_1} t_2^{a_2} \cdots t_n^{a_n} \big),\end{align*} where the sum runs over non-negative $a_i$ whose sum is $m$ and where $${ m \choose a_1 a_2 \cdots a_n } = \frac{m!}{ a_1! a_2! \cdots a_n!}.$$ Thus $$f(b_{\vec{I}}) = {d \choose i_1 i_2 \cdots i_n} b_d,$$ where $d = \sum_{j = 1}^n i_j$. There is likewise an induced map $$f: F(R[[t]]^{\star})^*\to F(R[[t_1, t_2, \ldots, t_n]]^{\star})^*,$$ and we identify $$F(R[[t]]^{\star})^*\cong R[c_{i_1, i_2, \cdots, i_k}: i_j \in \Z_{> 0}]$$ and $$F(R[[t_1, t_2, \ldots, t_n]]^{\star})^* \cong  R[c_{\vec{I}_1 \vec{I}_2 \cdots \vec{I}_k}: \vec{I}_j \in \Z^n_{\geq 0} - 0].$$ By calculation as above, this map satisfies $$f(C_m) = \sum_{\operatorname{degree} \vec{I}= m} C_{\vec{I}}$$ where the sum runs over $\vec{I} \in \Z^n_{\geq 0}$ of degree $m$, and $$f(c_m) = \sum_{\operatorname{degree} \vec{I}= m} {m \choose \vec{I}}c_{\vec{I}},$$ where  $${m \choose \vec{I}}= {m \choose i_1 i_2 \ldots i_n}$$ when $\vec{I} = (i_1, i_2, \ldots, i_n)$. By the $n=1$ case, $f(c_m)$ is in the $R$-subalgebra generated by the $f(C_m)$. Since $F(R[[t_1, t_2, \ldots, t_n]]^{\star})^*$ is a $\Z^n$-graded Hopf algebra, it follows that the homogenous pieces of  $f(c_m)$ are in the $R$-subalgebra generated by the homogeneous pieces of $f(C_m)$. Thus ${m \choose \vec{I}}c_{\vec{I}}$ is in the $R$-subalgebra generated by the $C_{\vec{I}}$. Since ${m \choose \vec{I}}$ is invertible in $R$, it follows that $c_{\vec{I}}$ is in this subalgebra. 

An arbitrary element $c$ of $\mathcal{C}$ is of the form $c_{\vec{I}_1 \vec{I}_2 \cdots \vec{I}_k}$. The multiplication on $F(R[[t_1, t_2, \ldots, t_n]]^{\star})^*$ is dual to $$b_{\vec{I}_1} b_{\vec{I}_2} b_{\vec{I}_3} \cdots b_{\vec{I}_k} \mapsto \prod_{j = 1}^k (\sum_{0 \leq \vec{J} \leq \vec{I}_j} b_{\vec{J}} \otimes b_{\vec{I}_j - \vec{J}} ).$$ It follows that the difference $c - c_{\vec{I}_1 \vec{I}_2 \cdots \vec{I}_{k-1}} c_{\vec{I}_k} $ is a linear combination of elements of $\mathcal{C}$ of length less than $k$. Thus $c$ is in the $R$-subalgebra generated by the $C_{\vec{I}}$ by induction on the length $k$.

%%%%%%%%%%%%%

%We first show that the $C_{\vec{I}}$ generate $F(R[[t_1, t_2, \ldots, t_n]]^{\star})^*$ as an $R$-algebra: 

%An arbitrary element $c$ of $\mathcal{C}$ is of the form $c_{\vec{I}_1 \vec{I}_2 \cdots \vec{I}_k}$ with the $\vec{I}_j$ not necessarily distinct in $\Z^n_{\geq 0} - 0$. The coordinates of the vectors $\vec{I}_j$ for $j = 1, 2, \ldots, k$ form some finite list of non-negative integers. Let $m(c)$ be the largest number appearing in this list and $N(c)$ be the number of times $m(c)$ appears. For each $j \in \{1,2, \ldots, k\}$, let $\vec{I'}_j$ be the vector $\vec{I}_j$ with all of its $m(c)$'s replaced by $m(c)-1$'s. The multiplication on $F(R[[t_1, t_2, \ldots, t_n]]^{\star})^*$ is dual to $$b_{\vec{I}_1} b_{\vec{I}_2} b_{\vec{I}_3} \cdots b_{\vec{I}_k} \mapsto \prod_{j = 1}^k (\sum_{0 \leq \vec{J} \leq \vec{I}_j} b_{\vec{J}} \otimes b_{\vec{I}_j - \vec{J}} ).$$ Thus the difference $$c_{\vec{I'}_1\vec{I'}_2\vec{I'}_3\cdots\vec{I'}_k} C_{\sum_{j = 1}^k (\vec{I}_j - \vec{I'}_j)} - c$$ is a sum of terms $c' \in \mathcal{C}$ such that either $m(c') < m(c)$ or $m(c') = m(c)$ and  $N(c') < N(c)$. Since all $c$ such that $m(c) = N(c) = 1$ are of the form $C_{\vec{I}}$, we have  by induction on $m(c)$ and then $N(c)$ that the $C_{\vec{I}}$ generate $F(R[[t_1, t_2, \ldots, t_n]]^{\star})^*$ as an $R$-algebra.

We now show that there are no relations among the $C_{\vec{I}}$, i.e. that the distinct monomials $C_{\vec{I}_1}C_{\vec{I}_2} \cdots C_{\vec{I}_k}$ form an $R$-linearly independent subset of $$F(R[[t_1, t_2, \ldots, t_n]]^{\star})^*:$$ 

Fix $\vec{M}$ in $\Z_{\geq 0}^n - 0$.  Let $\mathcal{I}$ denote the set of finite sets $\{ \vec{I}_1, \vec{I}_2, \ldots , \vec{I}_k\}$ with $\vec{I}_j$ in $\Z_{\geq 0}^n - 0$ and $\sum_{j = 1}^k \vec{I}_ j = \vec{M}$. For $S$ in $\mathcal{I}$ with $S = \{ \vec{I}_1, \vec{I}_2, \ldots , \vec{I}_k\}$, let $C_S = \prod_{j = 1}^k C_{\vec{I}_j}$ in $F(R[[t_1, t_2, \ldots, t_n]]^{\star})^*$ and let $c_S = c_{\vec{I}_1 \vec{I}_2 \cdots \vec{I}_k}$ in $\mathcal{C}$. Note that for all $S$ in $\mathcal{I}$, $C_S$ is in the sub-$R$-module $\mathcal{F}_{\vec{M}}$ spanned by $\{ c_S : S \in \mathcal{I} \}$. By the above, $\{ C_S: S \in \mathcal{I}\}$ spans $\mathcal{F}_{\vec{M}}$. Since $\mathcal{F}_{\vec{M}}$ is isomorphic to $R^N$ where $N$ is the (finite) cardinality of $\mathcal{I}$, any spanning set of size $N$ is also a basis \cite[Ch 3 Exercise 15]{AM}. In particular $\{ C_S : S \in \mathcal{I} \}$ is an $R$-linearly independent set. Since any monomial in the $C_{\vec{I}}$ is of the form $C_S$ for some $\vec{M}$, it follows that the distinct monomials in the $C_{\vec{I}}$ form a linearly independent set. \end{proof}

\begin{remark} The $C_{\vec{I}}$ do not generate $F(R[[t_1, t_2, t_3]]^{\star})^*$ when $2$ is not invertible in $R$ as can be checked by computing that the homogenous degree-$(1,1,1)$ component of the $R$-subalgebra generated by the $C_{\vec{I}}$ is the span of the following five vectors \begin{align*} &C_{e_1} C_{e_2} C_{e_3}=c_{(1,1,1)} + c_{(1,1,0)(0,0,1)}+ c_{(1,0,1)(0,1,0)}+  c_{(0,1,1)(1,0,0)}+ c_{e_1e_2e_3}, \\ &C_{(0,1,1)}C_{e_1} = c_{(1,1,0)(0,0,1)}+c_{(1,0,1)(0,1,0)} +c_{e_1e_2e_3}\\ &C_{(1,0,1)}C_{e_2} = c_{(0,1,1)(1,0,0)} +  c_{(1,1,0)(0,0,1)}+c_{e_1e_2e_3} \\ &C_{(1,1,0)}C_{e_3} =  c_{(1,0,1)(0,1,0)}+  c_{(0,1,1)(1,0,0)} + c_{e_1e_2e_3} \\ &C_{(1,1,1)} = c_{e_1e_2e_3}.\end{align*} \end{remark}

\begin{lemma}\label{fgW_as_topHopf}If $R$ is a $\mathbb{Q}$-algebra or if $n=1$, the Cartier dual of the additive group scheme of $\W_{\Z_{\geq 0}^n - 0}(R)$ is the formal group associated to the additive group of $\W_{\Z_{\geq 0}^n - 0}(R)$.\end{lemma}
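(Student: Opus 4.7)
The plan is to deduce this lemma directly from the graded self-duality of Lemma \ref{Wn=Wnstar}. Let $H$ denote the graded Hopf algebra representing the affine group scheme underlying $\W_{\Z_{\geq 0}^n - 0}(R)$; by Lemma \ref{Wn=Fstar} it is the polynomial algebra in the variables $a_{\vec{I}}$ with the grading $\deg a_{\vec{I}} = i_1 + \ldots + i_n$, and each $\Gr_m H$ is a free $R$-module of finite rank.

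First I would pin down the topological Hopf algebras on the two sides being compared. By the definition of Cartier dual recalled in the introduction, the Cartier dual of $\Spec H$ has topological Hopf algebra
\[
H^{\star} \;=\; \Hom_R(H, R) \;=\; \prod_m \Hom_R(\Gr_m H, R) \;=\; \prod_m \Gr_m H^{*},
\]
with the natural inverse-limit topology. On the other side, the formal group associated to the additive group of $\W_{\Z_{\geq 0}^n - 0}(R)$ has topological Hopf algebra $\prod_m \Gr_m H$, obtained by completing $H$ along its grading with the analogous topology. It is worth pausing briefly to confirm this matches the convention for the formal group of $\W_{\Z_{\geq 0}^n - 0}(R)$, since for $H = R[a_{\vec{I}}]$ with infinitely many generators the graded completion differs in general from the augmentation-ideal completion.

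Granted these identifications, the lemma follows from Lemma \ref{Wn=Wnstar} by applying the graded-completion functor $\prod_m \Gr_m(-)$ to the isomorphism $H \cong H^{*}$ of graded Hopf algebras, producing a topological Hopf algebra isomorphism
\[
\prod_m \Gr_m H \;\cong\; \prod_m \Gr_m H^{*} \;=\; H^{\star},
\]
whose $\Spf$ is the claimed identification of the formal group of $\W_{\Z_{\geq 0}^n - 0}(R)$ with the Cartier dual of its affine group scheme. The real content is Lemma \ref{Wn=Wnstar}: once the graded Hopf algebra is known to be self dual, the remaining argument is simply unwinding the definitions of the Cartier dual and of the formal group determined by a graded Hopf algebra of finite type in each degree.
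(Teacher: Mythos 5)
Your proposal is correct and follows essentially the same route as the paper: both deduce the lemma from Lemma \ref{Wn=Wnstar} by writing the Cartier dual as the product $\prod_m \Gr_m(-)^*$ of graded duals and identifying the graded completion of the (self-dual) polynomial Hopf algebra with the function ring of the formal group, the only cosmetic difference being that the paper phrases everything in terms of $F(R[[t_1,\ldots,t_n]]^{\star})$ via Lemma \ref{Wn=Fstar} while you work directly with $H$. Your aside about the graded completion versus the augmentation-ideal completion is a genuine subtlety the paper passes over silently, and flagging it strengthens rather than changes the argument.
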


\begin{proof}
By Lemma \ref{Wn=Fstar}, the claim is equivalent to showing that the topological Hopf algebra $F(R[[t_1, t_2, \ldots, t_n]]^{\star})^{\star}$ is the ring of functions of the formal group associated to the additive group of $\W_{\Z_{\geq 0}^n - 0}(R)$.
 
The Cartier dual $F(R[[t_1, t_2, \ldots, t_n]]^{\star})^{\star}$ of the Hopf algebra $F(R[[t_1, t_2, \ldots, t_n]]^{\star})$ is the product $$ F(R[[t_1, t_2, \ldots, t_n]]^{\star})^{\star} \cong \prod_{m=0}^{\infty} \Gr_m F(R[[t_1, t_2, \ldots, t_n]]^{\star})^*$$ over $m$ of the $m$th graded pieces of the graded Hopf algebra dual. By Lemma \ref{Wn=Wnstar}, $$F(R[[t_1, t_2, \ldots, t_n]]^{\star})^* \cong F(R[[t_1, t_2, \ldots, t_n]]^{\star}) \cong R[ b_{\vec{I}} : \vec{I} \in \Z_{\geq 0}^n ] / \langle b_{\vec{0}} - 1\rangle,$$ with comultiplication determined by \eqref{R[t]starcomult}. So $$\prod_{m=0}^{\infty} \Gr_m F(R[[t_1, t_2, \ldots, t_n]]^{\star})^* \cong R[[ b_{\vec{I}} : \vec{I} \in \Z_{\geq 0}^n ]] / \langle b_{\vec{0}} - 1\rangle,$$ and applying Lemma \ref{Wn=Fstar} completes the proof.

\end{proof}

%    Text of article.

%    Bibliographies can be prepared with BibTeX using amsplain,
%    amsalpha, or (for "historical" overviews) natbib style.
\bibliographystyle{amsplain}
%    Insert the bibliography data here.

%\bibliographystyle{amsalpha}

\end{document}